\documentclass[11pt]{amsart}

\usepackage{amssymb}
\usepackage{enumitem}
\usepackage{booktabs, multirow}
\usepackage{array}
\usepackage{url}
\usepackage[utf8]{inputenc}
\usepackage{mathtools, hyperref}

\makeatletter
\newcommand{\labeltarget}[1]{\Hy@raisedlink{\hypertarget{#1}{}}}
\makeatother

\newcommand{\PreserveBackslash}[1]{\let\temp=\\#1\let\\=\temp}
\newcolumntype{C}[1]{>{\PreserveBackslash\centering}p{#1}}

\newtheorem{thm}{Theorem}
\newtheorem{cor}[thm]{Corollary}
\newtheorem{lem}[thm]{Lemma}
\newtheorem{prop}[thm]{Proposition}

\theoremstyle{definition}

\newtheorem{exam}[thm]{Example}
\newtheorem{rem}[thm]{Remark}



\newcommand{\R}{\mathbb{R}}
\newcommand{\C}{\mathbb{C}}
\newcommand{\D}{\mathbb{D}}
\newcommand{\DA}{\mathcal{A}}

\newcommand{\norm}[1]{\left\Vert#1\right\Vert}
\newcommand{\abs}[1]{\left\vert#1\right\vert}
\newcommand*\CLOSED[1]{\overline{#1}}
\newcommand*\CONJ[1]{\overline{#1}}
\newcommand*{\EVALX}{\delta}

\newcommand*{\HOLO}{\mathcal H}

\newcommand*\SUPP{supp}


\newcommand*\BOP{\mathcal L}  


\DeclareMathOperator{\EN}{wem}



\topmargin=-0.2cm \textheight=24cm \textwidth=16cm \hoffset=-1.6cm

\newcommand*\APPRE{\mathcal{AE}} 

\let\originalleft\left
\let\originalright\right
\renewcommand{\left}{\mathopen{}\mathclose\bgroup\originalleft}
\renewcommand{\right}{\aftergroup\egroup\originalright}

\allowdisplaybreaks


\begin{document}

\title[Weak null maximum and integration of families of multiplication operators]{Weak null maximum and integration of families of multiplication operators}

\author[ ]{David Norrbo}

\email{d.norrbo@reading.ac.uk}
\address{Department of Mathematics and Statistics, School of Mathematical and Physical Sciences, University of Reading, Whiteknights, PO Box 220, Reading RG6 6AX, UK.}

\thanks{}

\keywords{Bergman space, essential norm, Hardy space, integration operator, Minkowski's inequality, multiplication operator, reflexive space, weighted Bergman spaces}

\subjclass{47B91, 47B38, 47G10, 30H20, 47A30} 

\begin{abstract}
Let $X$ be a reflexive Hardy space or weighted Bergman space on the unit disk in the complex plane. For a bounded linear operator $S$ on $X$, let $\EN(S):= \sup_{(f_n)} \limsup_n \|Sf_n\|$, that is, the supremum of cluster points of $n\mapsto \|S f_n\|$, where $(f_n)$ is any unit norm weakly null sequence. This quantity coincides with the essential norm on the reflexive weighted Bergman spaces. 

For a suitable family $\{ g_t : t\in]0,1[ \}$ of bounded analytic functions on the unit disk, we characterize when one can exchange $\EN(\cdot)$ and integration over $t$ of the multiplication operators $M_{g_t}$, that is, when  \(\EN(  \int M_{g_t}\, dt  ) =   \int \EN( M_{g_t} ) \, dt \); when the functions $g_t,t\in]0,1[$ can be continuously extended to the unit circle, we obtain a neat function-theoretic characterization.
\end{abstract}

\maketitle

\section{Introduction}

Given a Banach space \(X\), we denote the weak null sequences of elements in a set \(M\subset X\) by \(W_0(M)\). If \(B_X\) is the unit ball with boundary, \(\partial B_X\), and \(S\in \BOP(X)\) (bounded operator), we examine the quantity
\[
\EN(S) := \sup_{(f_n)\in W_0(\partial B_X)} \limsup_n \norm{S f_n}.
\]
If \(X\) is reflexive, then \(W_0(\partial B_X) \neq \emptyset\) since \(B_X\) is sequentially weakly compact, see for example \cite[3.5 Reflexive Spaces]{Brezis-2011}. Notice that although the weak closure of \(\partial B_X\) contains \(0\) for a general Banach space, there is no sequence converging weakly to \(0\) if \(X\) has the Schur property (weak and norm convergence coincide for sequences), for example if \(X =\ell^1\), in which case \(\EN(\cdot)\) should be expressed differently. The main results in this work only concern particular reflexive spaces. The quantity \(\EN(S)\) is always smaller than the essential norm, \(\norm{S}_e \) (distance to compact operators), and it coincides with the essential norm if \(X\) is a Hilbert space or if \(X\) belongs to a certain class of Banach spaces, including the weighted Bergman spaces \(A^p_\alpha, \, p>1\). This conclusion for the weighted Bergman spaces is made via a property called \((M_p)\) in which case the formula for the essential norm is given in \cite[p.~499]{Werner-1992}. The fact that \(A^p_\alpha\) has the property can be seen in \cite[Corollary 3.6]{Kalton-1995} and the three lines preceding the corollary. For more details we refer the reader to \cite{Arxiv:ML-2025} and \cite{Lindstrom-2024C}.

In particular, given a family of bounded operators \(\{ S_t : t\in]0,1[ \}\), one can ask when 
\begin{equation}\label{eq:generalStatement}
\EN\Big(     \int_0^1  S_t \, dt  \Big)  =  \int_0^1 \EN(S_t) \, dt.
\end{equation}
For the quantity on the left to be well-defined, we assume that the family of operators satisfies \(W(X)\), that is, \(t\mapsto S_t f\) is continuous for every \(f\in X\) and \(\int_0^1 \norm{S_t f}_X \, dt < \infty \); see for example \cite[pp.~295--298]{Katznelson-2004}. In this case, the left-hand side of \eqref{eq:generalStatement} will always be smaller than the right-hand side, which in general is not true, see for example \cite[bottom of page 5]{Arxiv:EssIntWCO-2025}.

In this work, we examine when \eqref{eq:generalStatement} holds, and we focus on \(S_t\) being a multiplication operator for every \(t\in]0,1[\) and \(X\) being a separable reflexive space of analytic functions on the unit disk, either a Hardy space \(H^p\) or a weighted Bergman space \(A^p_{\alpha}, \, p>1\). In \cite{Arxiv:EssIntWCO-2025}, the author deals with the same question concerning families of certain weighted composition operators, which do not include the multiplication operators. In addition to expanding the knowledge about when \eqref{eq:generalStatement} holds, this article serves as a less technical introduction to the study of \eqref{eq:generalStatement} in general, compared to \cite{Arxiv:EssIntWCO-2025}. In contrary to the non-sharp sufficient and necessary conditions obtained in \cite{Arxiv:EssIntWCO-2025}, we are in the setting of families of multiplication operators able to obtain characterizations. There are two main results. Theorem \ref{thm:aCharacterization} gives a characterization of when \eqref{eq:generalStatement} holds in the most general setting of this work. Assuming the multiplication symbols are continuous on the boundary, Theorem \ref{thm:mainDA} yields a function-theoretic characterization of \eqref{eq:generalStatement}. In addition, Proposition \ref{prop:functionTheoreticWX} gives a function-theoretic characterization of \(W(X)\) when \(X=H^p\) or \(X=A^p_{\alpha} \), and  \(p\in[1,\infty[\). 

The linear space of analytic functions on the complex unit disk is denoted by \(\HOLO(\D)\). The rotation-invariant unit measure on the unit circle \(\partial \D\) is denoted by \(m\) and the translation-invariant area measure in \(\C\) yielding \(\D\) to have unit size is denoted by \(A\). In polar coordinates, \(z=re^{iu}\), the measures have the form \(dm(z) = du\) and \(dA(z) = r\, dr\, du / \pi\). For \(p\in]0,\infty[\) and \(\alpha>-1\), the Hardy spaces and the Bergman spaces are defined as
\[
H^p := \bigg\{ f\in \HOLO(\D) :  \norm{f}_{H^p} := \lim_{r\to 1 }\Big(  \int_{\partial \D} \abs{ f(rz) }^p  \, dm(z) \Big)^{\frac{1}{p}}      \bigg\}
\]
and
\[
A^p_\alpha :=\bigg\{ f\in \HOLO(\D) :  \norm{f}_{A^p_{\alpha}} := \Big(  \int_{\partial \D} \abs{ f(z) }^p  \, dA(z) \Big)^{\frac{1}{p}}      \bigg\},
\]
respectively. The spaces are reflexive Banach spaces when \(p>1\). Functions \(f\in H^p,\, p>0\), have nontangential limits at almost every point on the boundary \(\partial\D\), see for example \cite[Chapter 2]{Duren-1970}. We will adopt the notation of \(f\in H^p\) to be a function defined everywhere on \(\CLOSED{\D}\); on \(\D\) it is analytic, on \(\partial \D\) it is defined as its nontangential limit, whenever it exists and zero otherwise. For \(f\in H^p, \, p>0\), its restriction \(f|_{\partial \D} \in L^p(\partial \D)\), and the norm can be expressed as \((\int_{\partial \D} \abs{f}^p \, dm)^{1/p}\), see for example \cite[Theorem 2.6]{Duren-1970}). The linear space of bounded analytic functions, we denote by \(H^\infty\), which is a Banach algebra when equipped with the norm \( f\mapsto \norm{f}_\infty:= \sup_{z\in\D} \abs{f(z)}\). For information about these spaces, we refer the reader to the monographs \cite{Zhu-2005,Zhu-2007,Duren-1970} and \cite{Hoffman-1962}. The subspace of \(H^\infty\) consisting of continuous functions on the closed unit disk, \(C(\CLOSED \D)\), is denoted by \( \DA \).

One important tool when dealing with multiplication operators on Hardy spaces or weighted Bergman spaces is the \emph{approximate evaluation maps}, which are sequences of elements of unit norm \( (\phi_n) \subset  \partial B_X\) for which there exists a \(\xi\in\partial \D\) such that \(\lim_n \sup_{z\in\D\setminus B(\xi,\epsilon)} \abs{\phi_n(z)} = 0\) for every \(\epsilon>0\). For a given \(\xi\in\partial \D\), the family of such sequences is denoted by \(\APPRE(\xi)\).


\section{Maximizing sequence for multiplication operators on \(H^p\) and \(A^p_\alpha\) }

Next, we present some classical results concerning the evaluation maps. Let \(p\in]0,\infty[\) and \(\alpha>-1\). The norm of the evaluation maps, \(\delta_z \colon f \mapsto f(z), \, z\in \D\), on \(H^p\) and \(A^p_\alpha\) are given by
\begin{equation}\label{eq:NormOfEval}
\norm{\delta_z}_{(H^p)^*} = (1-\abs{z}^2)^ { -\frac{1}{p}} \quad \text{ and } \quad \norm{\delta_z}_{(A^p_\alpha)^*} = (1-\abs{z}^2)^ { -\frac{2+\alpha}{p}}, 
\end{equation}
respectively. Moreover, for a any sequence \((z_n)\subset \D\)---we will specify one later---the norm of \(\delta_{z_n}\) on the respective space is attained by
\begin{equation}\label{eq:ExtremalFunctionsForEval}
f^{H^p}_n\colon w\mapsto  \frac{  (1-\abs{z_n}^2)^{\frac{1}{p}}   }{ (1-\CONJ{z_n}w)^{\frac{2}{p}}  }  \quad \text{ and }   \quad    f^{A^p_\alpha}_n\colon w\mapsto  \frac{ (1 - \abs{z_n}^2)^{  \frac{2+\alpha}{ p }   }  }{  (1-\CONJ{z_n}w)^{2\frac{2+\alpha}{p} }}. 
\end{equation}
The fact that these functions have unit norm is easily seen by a change of variables; for the weighted Bergman spaces, see for example, \cite[Proposition 4.3]{Zhu-2007}. The fact that the given quantity in \eqref{eq:NormOfEval} is an upper bound can be found in, for example, \cite[Theorem 9.1 \& 4.14]{Zhu-2007}. 

Let \(X=H^p\) or \(X=A^p_\alpha\). Due to the lattice order on the underlying \(L^p\)-spaces, \(\abs{f_1(z)} \leq \abs{f_2(z)}\ \forall z\in \D \ \Rightarrow \norm{f_1}_X \leq \norm{f_2}_X\), we have \(\norm{M_g} \leq \norm{g}_\infty\), and since the evaluation functionals are bounded, we have
\[
\norm{g}_\infty = \sup_{z\in \D}  \abs{g(z)} \sup_{f\in B_X}  \frac{ \abs{\EVALX_z (f)}  }{ \norm{\EVALX_z} }  = \sup_{f\in B_X} \sup_{z\in \D}   \abs{\frac{\EVALX_z (M_g f) }{\norm{\EVALX_z}} }  \leq   \sup_{f\in B_X} \norm{M_g f}_X  = \norm{M_g}, 
\]
and we can conclude the inequalities above are equalities. In particular, since \(\CLOSED{\D}\) is compact, there is \(z_0\in \CLOSED{\D}\) and \(z_n\in \D\) such that \(z_n \to z_0\) and
\[
\lim_n \sup_{f\in B_X}  \abs{\frac{\EVALX_{z_n} (M_g f) }{\norm{\EVALX_{z_n}}} }  = \norm{M_g}  = \norm{g}_\infty.
\]
By the maximum modulus principle \(z_0\in\partial \D\). Moreover,
\begin{equation}\label{eq:MaxSeq}
\norm{g}_\infty \geq \norm{M_g f^X_n} \geq \frac{ \abs{ \EVALX_{z_n}(f^X_n g) } }{  \norm{\EVALX_{z_n}} } = \abs{g(z_n)} \stackrel{ n \to \infty }{\longrightarrow}  \norm{g}_\infty.
\end{equation}
We have now obtained the following lemma:

\begin{lem}\label{lem:MaximizingApproxEvalForMultOp}
 Let \(p\in]0,\infty[\), \(\alpha>-1\), and let \(X=H^p\) or \(X=A^p_{\alpha}\). For \(g\in H^\infty\) , there is a convergent sequence \( (z_n)_n\subset \D\) such that the induced sequence \( (f^X_n)_n \) is an approximate evaluation map that maximizes \(M_g\colon X \to X\). In other words, \(\lim_n \norm{ M_g f^X_n    } = \norm{M_g} = \norm{g}_\infty\).
\end{lem}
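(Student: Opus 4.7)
The plan is to observe that the bulk of the argument is already laid out in the paragraph preceding the lemma statement, and then to supply the one remaining ingredient, namely that the sequence $(f^X_n)$ does indeed belong to $\APPRE(z_0)$ for the limit point $z_0 = \lim_n z_n$. First I would reread the chain \eqref{eq:MaxSeq} to confirm that it provides both $\norm{M_g} = \norm{g}_\infty$ (via the inequality $\norm{g}_\infty \geq \norm{M_g}$ and the sandwich at the bottom of page before the lemma) and $\lim_n \norm{M_g f^X_n} = \norm{g}_\infty$. The existence of the sequence $(z_n) \subset \D$ with $\abs{g(z_n)} \to \norm{g}_\infty$ comes from compactness of $\CLOSED \D$ applied to the function $\abs{g}$ (via the evaluation formula for $\norm{M_g}$), and the maximum modulus principle forces the accumulation point $z_0$ to lie on $\partial \D$. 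So the ``maximizing'' and norm parts of the statement are in hand with no further work.

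The only remaining task is to verify that $(f^X_n) \in \APPRE(z_0)$, i.e.\ that for each $\epsilon > 0$,
\[
\lim_n \sup_{w \in \D \setminus B(z_0, \epsilon)} \abs{f^X_n(w)} = 0.
\]
Unit norm is already settled, so this reduces to a pointwise estimate on the normalized reproducing-kernel formulas in \eqref{eq:ExtremalFunctionsForEval}. The natural tool is the standard identity
\[
\abs{1 - \CONJ{z_n} w}^2 = (1-\abs{z_n}^2)(1-\abs{w}^2) + \abs{z_n - w}^2,
\]
which in particular gives $\abs{1 - \CONJ{z_n} w} \geq \abs{z_n - w}$. For $n$ large enough that $\abs{z_n - z_0} < \epsilon/2$, the triangle inequality yields $\abs{z_n - w} \geq \epsilon/2$ uniformly in $w \in \D \setminus B(z_0,\epsilon)$. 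Plugging this into the expressions for $f^{H^p}_n$ and $f^{A^p_\alpha}_n$ bounds the denominators away from zero, while the numerators $(1-\abs{z_n}^2)^{1/p}$ respectively $(1-\abs{z_n}^2)^{(2+\alpha)/p}$ tend to zero as $z_n \to z_0 \in \partial \D$. This yields the required uniform decay.

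I expect no real obstacle: everything is a direct consequence of the concentration of the normalized reproducing kernels of $H^p$ and $A^p_\alpha$ at boundary points, and the maximum modulus principle to force $z_0 \in \partial \D$. The lemma is essentially a bookkeeping statement packaging the computation that precedes it, with only the verification of the approximate-evaluation-map property requiring the short kernel estimate above.
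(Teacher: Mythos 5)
Your proposal is correct and follows the paper's own route: the paper proves the lemma precisely by the computation in the paragraph preceding it (the evaluation-functional identity giving $\norm{M_g}=\norm{g}_\infty$, compactness of $\CLOSED{\D}$, the maximum modulus principle placing $z_0\in\partial\D$, and the sandwich \eqref{eq:MaxSeq}). The one detail the paper leaves implicit --- that $(f^X_n)\in\APPRE(z_0)$ --- you supply correctly via the identity $\abs{1-\CONJ{z_n}w}^2=(1-\abs{z_n}^2)(1-\abs{w}^2)+\abs{z_n-w}^2$ and the vanishing of the numerators as $\abs{z_n}\to 1$.
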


It is worth mentioning the following classical result on reflexive Banach spaces of analytic functions with bounded evaluation functionals connecting the weak topology with the function-theoretic geometry on \(\D\). If the space \(X\) is reflexive, the unit ball \(B_X\) is weakly compact. In particular, the boundedness of evaluation maps together with Banach-Steinhaus theorem yields that weak convergence coincides with uniform convergence on compact subsets of \(\D\), illuminating that an approximate evaluation map is a special case of weakly null sequences. 

Another important result is that \(B_X\) equipped with the weak topology is metrizable if and only if the dual \(X^*\) is separable, see for example \cite[Theorem 3.29]{Brezis-2011}. In particular since the polynomials are dense in  \(H^p\) and \(A^p_{\alpha}\), \(p\in]0,\infty[\), \(\alpha>-1\) (see \cite[Proposition 2.6 and Corollary 4.26]{Zhu-2005}), if \(p>1\), the weak topology coincides with uniform convergence on compact subsets and restricted to \(B_X\) the weak topology is metrizable.

\section{Some tools and facts}

We begin the section with a diagonalization argument:

\begin{rem}[Diagonalization argument for weakly null sequences] \label{rem:diagArg}
If for \(k\in \mathbb N\), \((x_n^{(k)})\subset B_X\) is a weak null sequence sequence, then we can create a new sequence \((y_n)\) by choosing \(1<n_1<n_2<\ldots\) and \(n_1',n_2',\ldots \geq 1\), and defining
\[
y_n := \begin{cases}  
x_n^{(1)} &  n=1,\ldots,n_1; \\
x_{n+n_1'}^{(2)} &  n=n_1,\ldots,n_2; \\
x_{n+n_2'}^{(3)} &  n=n_2,\ldots,n_3; \\
\ldots & \ldots.
\end{cases}
\]
If there exists a metric \(d_w\) inducing the weak topology on \(B_X\), then by choosing the shifts \(n_j', j=1,2,\ldots\) large enough we can grant that \(d_w(y_n,0)< \frac{1}{n_j}\) whenever \(n>n_j\) for \(j=1,2,\ldots\). This wouldn't be possible if we could not measure the distance to zero with respect to the weak topology. The process is executed by choosing the parameters in the following order: \(n_1,n_1',n_2,n_2',\ldots\), where \(n_1,n_2,\ldots\) corresponds to the usual diagonalization argument while \(n_1',n_2',\ldots\) are chosen to ensure \((y_n)\) remains weakly null.
\end{rem}

We also mention that by Banach-Steinhaus theorem \(t\mapsto S_t, t\in]0,1[\) is pointwise continuous implies \(\{S_t: t\in[\epsilon,1-\epsilon]\}\) is uniformly bounded for every \(\epsilon>0\). Using this, we obtain the following lemma.

\begin{prop}\label{prop:functionTheoreticWX}
Let \(X=H^p\) or \(X=A^p_{\alpha} \), where  \(p\in[1,\infty[\) and \(\alpha>-1\), and let \(g_t\in H^\infty, t\in]0,1[\). The family \(\{M_{g_t}:t\in]0,1[\}\) satisfies \(W(X)\) if and only if 
\begin{enumerate}
\item  for every \(t_0\in]0,1[\), it holds that  \( \lim_{t\to t_0} \norm{g_t-g_{t_0} }_X = 0 \);  \label{eq:Wcond1}
\item for every compact set \(K\subset ]0,1[\), it holds that \(\sup_{t\subset K} \norm{g_t}_\infty<\infty\); and  \label{eq:Wcond2}
\item \(\int_0^1 \norm{g_t}_\infty \, dt <\infty \).  \label{eq:Wcond3}
\end{enumerate}
\end{prop}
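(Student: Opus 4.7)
My plan is to prove the equivalence in two parts, with $(1)\wedge(2)\wedge(3)\Rightarrow W(X)$ being essentially routine and $W(X)\Rightarrow(3)$ being the main difficulty. For the reverse direction, fix $f\in X$, $t_0\in]0,1[$ and approximate $f$ by a polynomial $p$ with $\norm{f-p}_X<\epsilon$ (polynomials being dense in $X$); then
\[
\norm{M_{g_t}f-M_{g_{t_0}}f}_X \le (\norm{g_t}_\infty+\norm{g_{t_0}}_\infty)\norm{f-p}_X + \norm{p}_\infty\norm{g_t-g_{t_0}}_X,
\]
where the first summand is $\le C\epsilon$ on a compact neighbourhood of $t_0$ by (2) and the second tends to $0$ by (1). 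The integrability requirement of $W(X)$ is then the routine bound $\norm{M_{g_t}f}_X\le\norm{g_t}_\infty\norm{f}_X$ combined with (3).

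For the forward direction, (1) is immediate from applying the continuity hypothesis to $f=1\in X$, since $M_{g_t}1=g_t$. Condition (2) is the Banach--Steinhaus observation quoted just before the proposition, applied to the family $\{M_{g_t}:t\in K\}$, combined with the identity $\norm{M_g}=\norm{g}_\infty$ established in the previous section.

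The main obstacle is (3). My plan is to test against the normalised extremal functions $f^X_{a_k}$ of Lemma~\ref{lem:MaximizingApproxEvalForMultOp} attached to a dense sequence $(a_k)\subset\D$. Each has unit $X$-norm and satisfies $|g_t(a_k)|\le\norm{M_{g_t}f^X_{a_k}}_X$ (a consequence of the extremality $f^X_{a_k}(a_k)=\norm{\EVALX_{a_k}}_{X^*}$), and $\norm{g_t}_\infty=\sup_k|g_t(a_k)|$ by density and the maximum modulus principle, giving $\norm{g_t}_\infty=\sup_k\norm{M_{g_t}f^X_{a_k}}_X$. Integrating this identity directly fails because the exchange $\int\sup_k\ge\sup_k\int$ goes the wrong way. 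The natural resolution is to read the integrability clause in $W(X)$ in the strong-Bochner sense $\int_0^1\norm{S_t}\,dt<\infty$ (needed for $\int_0^1M_{g_t}\,dt$ to be defined as an element of $\BOP(X)$, cf.\ the Katznelson reference), in which case (3) is immediate since $\norm{M_{g_t}}=\norm{g_t}_\infty$. Failing that, one could attempt to build a single $f\in X$ by combining countably many $f^X_{a_k}$'s via the diagonalisation scheme of Remark~\ref{rem:diagArg}, engineered so that $\norm{M_{g_t}f}_X\gtrsim\norm{g_t}_\infty$ on a measurable partition of $]0,1[$ (using (2) to keep the local constants under control), and then deduce (3) from the pointwise integrability of $t\mapsto\norm{M_{g_t}f}_X$.
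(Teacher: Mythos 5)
Your handling of the converse direction and of items \eqref{eq:Wcond1} and \eqref{eq:Wcond2} in the forward direction coincides with the paper's proof (the paper uses the same polynomial-approximation estimate and the same Banach--Steinhaus observation). The only real divergence is item \eqref{eq:Wcond3}, which the paper dispatches with the single sentence that it ``follows from $\norm{M_g}=\norm{g}_\infty$'' --- an argument that is complete only if the integrability clause of $W(X)$ is read at the operator-norm level, $\int_0^1\norm{M_{g_t}}\,dt<\infty$, which is precisely your first resolution. You are right to distrust the literal pointwise reading: under the clause ``$\int_0^1\norm{M_{g_t}f}_X\,dt<\infty$ for every $f$'', the implication $W(X)\Rightarrow\eqref{eq:Wcond3}$ genuinely fails, so your fallback plan (diagonalising finitely or countably many $f^X_{a_k}$ into one $f$ with $\norm{M_{g_t}f}_X\gtrsim\norm{g_t}_\infty$ on a partition of $]0,1[$) cannot be carried out in general. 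The obstruction is that a single $f$ fixes one probability measure $\abs{f}^p\,dm$ on $\partial\D$, while the peak of $\abs{g_t}$ may sit near a boundary point $\xi(t)$ that moves with $t$: taking, for instance,
\[
g_t(z)=\frac1t\Bigl(\frac{1+\overline{\xi(t)}\,z}{2}\Bigr)^{N(t)},\qquad N(t)\sim t^{-2p},
\]
with $\xi(t)$ sweeping $\partial\D$ once on each dyadic block $[2^{-k-1},2^{-k}]$, one checks (the peak has width $\sim N(t)^{-1/2}$ and spends time $\lesssim 2^{-k(p+1)}$ over any fixed boundary point, so a H\"older estimate on each block gives a summable contribution $\lesssim 2^{-k}$) that $\sup_{\norm{f}_{H^p}\le 1}\int_0^1\norm{M_{g_t}f}_{H^p}\,dt<\infty$ and that \eqref{eq:Wcond1}--\eqref{eq:Wcond2} hold, yet $\int_0^1\norm{g_t}_\infty\,dt=\int_0^1 t^{-1}\,dt=\infty$. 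So the proposition requires the operator-norm reading of $W(X)$; with that reading your proof is complete and matches the paper's, and your explicit identification of the $\sup$/integral exchange as the delicate point is a genuine improvement on the paper's one-line treatment rather than a gap in your argument.
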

\begin{proof}
The relation between \eqref{eq:Wcond3} and \(W(X)\) follows from \(\norm{ M_g }=\norm{g}_\infty, g\in H^\infty\). Assuming \(t\mapsto M_{g_t}\) is pointwise continuous, an application of Banach-Steinhaus yields that \eqref{eq:Wcond2} holds. It is clear that if \(\{ M_{g_t} : t\in]0,1[ \}\) satisfies \(W(X)\), \(t\mapsto M_{g_t}1 = g_t\) is continuous, and hence, \eqref{eq:Wcond1} holds. 

Conversely, assume \eqref{eq:Wcond1} and \eqref{eq:Wcond2} hold. For any polynomial \(p\in X\), 
\[ 
 \limsup_{t\to t_0}  \norm{(g_t-g_{t_0})p }_X \leq  \norm{p}_\infty \limsup_{t\to t_0}  \norm{g_t-g_{t_0} }_X = 0.
\]
Hence, for every \(\epsilon>0\) and compact set \(K\subset ]0,1[\) containing \(t_0\) in its interior, the denseness of polynomials ensures that there exists a polynomial \(p\) such that for \(t\) close enough to  \(t_0\)
\begin{align*}
\norm{ (M_{g_t} - M_{g_{t_0}} ) f }_X &\leq \norm{ (M_{g_t} - M_{g_{t_0}} ) (f-p) }_X + \norm{ (M_{g_t} - M_{g_{t_0}} ) p }_X \\
& \leq  2\sup_{t\in K}\norm{g_t}_\infty  \norm{ f-p }_X + \norm{p}_\infty \norm{ g_t - g_{t_0}   }_X <\epsilon.
\end{align*}

\end{proof}

\begin{rem}
If \(X=A^p_{\alpha} \), where  \(p\in[1,\infty[\) and \(\alpha>-1\), then the norm-convergence in \ref{eq:Wcond1}  can be replaced by uniform convergence on compact subsets \(\D\).
\end{rem}

\begin{rem}[A multiplication operator] \label{rem:intOpIsMult}
Let \(\{g_t : t\in]0,1[\}\subset H^\infty\) and assume the induced family of multiplication operators satisfies \(W(X)\), where \(X=H^p\) or \(X=A^p_\alpha\), \(p\in]0,\infty[\) and \(\alpha>-1\). The integration operator will have the form \(  f\mapsto \int_0^1  M_{g_t} f \, dt = \int_0^1 g_t  \, dt \, f \). It follows from Proposition \ref{prop:functionTheoreticWX} and dominated convergence  that \( \int_0^1 g_t \, dt \in C(\CLOSED{\D}) \) if \(g_t\in \DA \), \(  t\in ]0,1[\). In general, since \(t\mapsto M_{g_t}1\) is continuous and the evaluation maps are bounded, an application of Banach-Steinhaus theorem yields \( \lim_{t\to t_0} \sup_{z\in K} \abs{g_t(z) - g_{t_0}(z) }  \) for every compact subset \(K\subset \D\). An application of Cauchy's formula now yields continuity of the nth derivative, more precisely, for every \(t_0\in ]0,1[\) and compact set \(K\subset \D\),
\[
\lim_{t\to t_0} \sup_{z\in K} \abs{g_t^{(n)}(z) - g_{t_0}^{(n)}(z) } = 0  \quad  n= 1,2\ldots. 
\]
By Leibniz theorem, we obtain that \(F_n:=\int_{\frac{1}{n}} ^{1-\frac{1}{n}}  g_t  \, dt \in \HOLO(\D)\), and so the uniform-limit-on-compact-subsets  \(\int_0 ^1  g_t  \, dt\) is also analytic. We can conclude that \(\int_0^1 M_{g_t} \, dt\) is a multiplication operator with symbol \( \int_0 ^1  g_t  \, dt \in H^\infty \). If \(g_t\in \DA\), then \( \int_0 ^1  g_t  \, dt \in \DA\).
\end{rem}

\section{Main results}

Let \(p\in]1,\infty[\), \(\alpha>-1\) and \(X=H^p\) or \(X=A^p_{\alpha} \). In this section we characterize the families \(\{M_{g_t} : t\in ]0,1[\}\) satisfying \(W(X)\) such that the inequality
\begin{equation}\label{eq:mainEquality}
\sup_{(f_n)\in W_0(\partial B_X)} \limsup_n  \norm{\int_0^1 M_{g_t}  f_n \, dt }_X \leq \int_0^1  \sup_{(f_n)\in W_0(\partial B_X)} \limsup_n \norm{ M_{g_t}  f_n  }_X \, dt
\end{equation}
is an equality. Our first characterization partitions the problem into a Minkowski's-inequality-type problem and a uniform-maximizing-sequence problem.

\begin{thm}\label{thm:aCharacterization}
Let \(p\in]1,\infty[\), \(\alpha>-1\), \(X=H^p\) or \(X=A^p_{\alpha} \) and \(\{M_{g_t} : t\in ]0,1[\}\) satisfies \(W(X)\), where \(g_t\in H^\infty\). The inequality \eqref{eq:mainEquality} is an equality if and only if there is a weakly null unit norm sequence \((\phi_n)\) such that 
\begin{equation}\labeltarget{eq:i1}\tag{I1}\label{eq:ineq1}
\lim_n  \norm{\int_0^1 M_{g_t} \phi_n \, dt }_X = \lim_n  \int_0^1 \norm{ M_{g_t} \phi_n }_X \, dt, 
\end{equation}
and
\begin{equation}\labeltarget{eq:i2}\tag{I2}\label{eq:ineq2}
 \liminf_n \norm{ M_{g_t} \phi_n }_X  =  \sup_{(f_n)\in W_0(\partial B_X)} \limsup_n \norm{ M_{g_t} f_n }_X
\end{equation}
for almost every \(t\in]0,1[\). Moreover, \((\phi_n)\) can be chosen to be an approximate evaluation map. 
\end{thm}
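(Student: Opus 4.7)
The plan is to reduce everything to two structural observations from the earlier material. First, combining Lemma~\ref{lem:MaximizingApproxEvalForMultOp} with the remark that approximate evaluation maps are weakly null gives $\EN(M_g)=\norm{g}_\infty$ for every $g\in H^\infty$. Second, by Remark~\ref{rem:intOpIsMult}, the integration operator is itself a multiplication operator, namely $M_G$ with $G=\int_0^1 g_t\,dt\in H^\infty$. Combining, the inequality \eqref{eq:mainEquality} collapses to the scalar identity $\norm{G}_\infty=\int_0^1 \norm{g_t}_\infty\,dt$.

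For the sufficiency direction, assuming $(\phi_n)\in W_0(\partial B_X)$ satisfies \eqref{eq:ineq1} and \eqref{eq:ineq2}, I would chain
\begin{align*}
\EN\left(\int_0^1 M_{g_t}\,dt\right)
 &\geq \lim_n \norm{\int_0^1 M_{g_t}\phi_n\,dt}_X
   = \lim_n \int_0^1 \norm{M_{g_t}\phi_n}_X\,dt \\
 &\geq \int_0^1 \liminf_n \norm{M_{g_t}\phi_n}_X\,dt
   = \int_0^1 \EN(M_{g_t})\,dt,
\end{align*}
where the first equality is \eqref{eq:ineq1}, the ensuing inequality is Fatou's lemma, and the final equality is \eqref{eq:ineq2}. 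The matching reverse inequality $\text{LHS}\leq\text{RHS}$ is general and follows by Minkowski's integral inequality, reverse Fatou with integrable majorant $t\mapsto\norm{g_t}_\infty$ supplied by $W(X)$, and the pointwise bound $\limsup_n \norm{M_{g_t}f_n}_X\leq \EN(M_{g_t})$ valid for every $(f_n)\in W_0(\partial B_X)$.

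For the necessity direction, I would apply Lemma~\ref{lem:MaximizingApproxEvalForMultOp} directly to $M_G$, obtaining an approximate evaluation map $(\phi_n)$ with $\lim_n\norm{M_G\phi_n}_X=\norm{G}_\infty$. The assumed equality in \eqref{eq:mainEquality} together with the general estimates forces
\begin{align*}
\norm{G}_\infty
 &= \lim_n \norm{\int_0^1 M_{g_t}\phi_n\,dt}_X
   \leq \liminf_n \int_0^1 \norm{M_{g_t}\phi_n}_X\,dt \\
 &\leq \limsup_n \int_0^1 \norm{M_{g_t}\phi_n}_X\,dt
   \leq \int_0^1 \norm{g_t}_\infty\,dt = \norm{G}_\infty,
\end{align*}
collapsing every inequality; this is exactly \eqref{eq:ineq1}. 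Moreover $\int_0^1 (\norm{g_t}_\infty - \norm{M_{g_t}\phi_n}_X)\,dt\to 0$ with a nonnegative integrand, so the sequence converges to $0$ in $L^1(]0,1[)$, and an a.e.-convergent subsequence of $(\phi_n)$, still an approximate evaluation map, establishes \eqref{eq:ineq2} together with the ``moreover'' clause.

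The main obstacle I anticipate is the ``moreover'' clause, i.e.\ upgrading from a generic weakly null witnessing sequence (which one could alternatively manufacture via the supremum defining $\EN$ and the diagonalization of Remark~\ref{rem:diagArg}) to an approximate evaluation map. This is handled surprisingly cleanly because the integration operator is itself a multiplication operator, so Lemma~\ref{lem:MaximizingApproxEvalForMultOp} supplies a structured AE maximizer from the outset, and the $L^1$-subsequence argument automatically propagates that structure to the refined sequence realising \eqref{eq:ineq2}.
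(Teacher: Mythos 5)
Your proposal is correct and follows the same overall skeleton as the paper: sufficiency by chaining the witnessing sequence through Minkowski/Fatou, and necessity by invoking Remark~\ref{rem:intOpIsMult} to view $\int_0^1 M_{g_t}\,dt$ as $M_G$ and applying Lemma~\ref{lem:MaximizingApproxEvalForMultOp} to get an approximate evaluation map whose presence collapses the whole chain of inequalities, which also disposes of the ``moreover'' clause exactly as in the paper. The one place where you genuinely diverge is the extraction of \eqref{eq:ineq2}: the paper first argues by contradiction that $\lim_n\norm{M_{g_t}\phi_n}_X$ exists for a.e.\ $t$ (claiming a single subsequence along which the $\limsup$ drops on a set of positive measure) and then uses the ``$f_1\le f_2$, $\int f_1=\int f_2$ implies $f_1=f_2$ a.e.''\ principle, whereas you observe that the nonnegative defect $t\mapsto\norm{g_t}_\infty-\norm{M_{g_t}\phi_n}_X$ tends to $0$ in $L^1(]0,1[)$ and pass to an a.e.-convergent subsequence. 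Your route is cleaner and avoids the measurable-selection subtlety implicit in the paper's contradiction argument (producing one subsequence that works simultaneously for all $t$ in a positive-measure set requires more care than the paper spends on it); the paper's route, on the other hand, does not lean on the scalar identifications $\EN(M_g)=\norm{g}_\infty$ and $\int_0^1 M_{g_t}\,dt=M_G$ in the sufficiency direction, so that half of its argument transfers verbatim to more general families $\{S_t\}$, a point the author explicitly flags after the theorem.
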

\begin{proof}
Assume first that \ref{eq:ineq1} and \ref{eq:ineq2} hold. From \ref{eq:ineq2} it follows that \(  \lim_n \norm{ M_{g_t} \phi_n }_X \) exists, so by dominated convergence, the assumptions yield
\[
\lim_n  \norm{\int_0^1 M_{g_t} \phi_n \, dt }_X = \int_0^1  \sup_{(f_n)\in W_0(\partial B_X)} \limsup_n \norm{ M_{g_t} f_n }_X \, dt.
\]
The statement now follows from
\[
\lim_n  \norm{\int_0^1 M_{g_t} \phi_n \, dt }_X \leq    \sup_{(f_n)\in W_0(\partial B_X)} \limsup_n       \norm{ \int_0^1 M_{g_t} f_n \, dt }_X    \leq  \int_0^1  \sup_{(f_n)\in W_0(\partial B_X)} \limsup_n \norm{ M_{g_t} f_n }_X \, dt.
\]

Assume now that \eqref{eq:mainEquality} is an equality. Since \( \int_0^1 M_{g_t} \, dt\) is a multiplication operator, there is an approximate evaluation map \((\phi_n)\)  (see Lemma \ref{lem:MaximizingApproxEvalForMultOp}) such that the first equality below holds:

\begin{equation}\label{eq:6ImplyChar}
\begin{split}
\sup_{(f_n)\in W_0(\partial B_X)} \limsup_n  \norm{\int_0^1 M_{g_t} f_n \, dt }_X &= \lim_n  \norm{\int_0^1 M_{g_t} \phi_n \, dt }_X \leq \limsup_n  \int_0^1 \norm{ M_{g_t} \phi_n }_X \, dt \\
&\leq   \int_0^1 \limsup_n \norm{ M_{g_t} \phi_n }_X \, dt \\
&\leq     \int_0^1  \sup_{(f_n)\in W_0(\partial B_X)} \limsup_n \norm{ M_{g_t} f_n }_X \, dt.
\end{split}
\end{equation}
By the assumption, all quantities above are equal. Hereafter, we let \((\phi_n)\) denote a suitable subsequence of \((\phi_n)\) (which will still be an approximate evaluation map) for which the limit \(\lim  \int_0^1 \norm{ S_t \phi_n }_X \, dt\) exists. Note that the sequence \((\phi_n)\) is a weakly null sequence that satisfies \ref{eq:ineq1}. We claim that the limit \(\lim_n  \norm{ M_{g_t} \phi_n }_X \) exists for almost every \(t\in ]0,1[\). If this was not the case, then there is a subsequence \((\phi_{n_k})\) and a set \( M\subset  ]0,1[\) of positive Lebesgue measure, such that 
\[
\limsup_k  \norm{ M_{g_t} \phi_{n_k} }_X  < \limsup_n  \norm{ M_{g_t} \phi_n }_X, 
\]
for all \(t\in M\). In view of \eqref{eq:6ImplyChar}, we have a contradiction with \eqref{eq:mainEquality} being an equality.

Since 
\[
\lim_n \norm{ M_{g_t} \phi_n }_X \leq  \sup_{(f_n)\in W_0(\partial B_X)} \limsup_n \norm{ M_{g_t} f_n }_X
\]
for almost every \(t\in]0,1[\), the assumption gives equality for almost every \(t\in]0,1[\), that is, \ref{eq:ineq2} is satisfied. Here we used the fact that, given two real valued functions \(f_1\) and \(f_2\), if \(f_1\leq f_2\) and \(\int f_1 = \int f_2\), then \(f_1 = f_2\) almost everywhere on the path of integration.

\end{proof}

It is worth noticing that the sufficiency of the condition holds for other families of operators too. In view of multiplication operators, condition \ref{eq:ineq2} loosely says that \(\abs{g_t(z)}\)  should be maximized as \(z\) approaches \(z_0\in\partial \D\), which should be independent of \(t\). Condition \ref{eq:ineq1} says nothing about maximizing the operators \(M_{g_t}\), but that the argument of \(g_t(z)\) should, in the limit, be the same for almost every \(t\in]0,1[\) (modulus can be moved inside the integral without changing the value), and that Minkowski's inequality, in a certain limit sense, is an equality for \(\abs{g_t(z)}\). Next, we focus on the case \(g_t\in\DA \) and obtain a simpler characterization for  \eqref{eq:mainEquality} to be an equality.

We will now illuminate, by an application of Prokhorov's theorem, how the dynamic Minkowski's inequality (involving limits as in \ref{eq:ineq1}) can be an equality in situation where the standard (static) Minkowski's inequality would be a strict inequality. Let \((\phi_n)\subset \partial B_X\) be a sequence converging weakly to \(\phi\in B_X\). By Prokhorov's theorem (using the fact that  \(\CLOSED{\D}\) and \(\partial \D\) are compact), there is a subsequence \((\phi_{n_k})\) and a Borel measure \(\mu = \mu^{X,(\phi_{n_k})}\) such that for every \(F\in C(\CLOSED{\D})\)
\[
\begin{array}{l l}
\lim_k   \int_{\D}  F(z)  \abs{\phi_{n_k}(z)}^p \, dA(z)  = \int_{\CLOSED{\D}} F(z)  \, d\mu(z)  & \text{if } X=A^p_\alpha,  \\[.5cm] 
\lim_k   \int_{\partial \D}  F(z)  \abs{\phi_{n_k}(z)}^p \, dm(z)  = \int_{\partial \D} F(z)  \, d\mu(z)  & \text{if } X=H^p.
\end{array}
\]
Notice that in the case \(X=A^p_\alpha\), the support \( \SUPP(\mu)\subset \partial \D\) if \(\phi_{n_k}\) converges uniformly to zero on compact subsets of \(\D\), equivalently, \( \phi_{n_k} \xrightharpoonup{} 0\) (weakly).

If \(g_t\in\DA \), then \(\int_0^1 g_t\, dt\in\DA \), so an application of dominated convergence and Prokhorov's theorem yields \ref{eq:ineq1} is equivalent to
\begin{equation}\label{eq:MinkowskiAfterProkhorov}
\bigg( \int_{ \partial \D}  \abs{ \int_0^1 g_t(z) \, dt }^p  \, d\mu(z) \bigg)^{\frac{1}{p}}  =  \int_0^1 \Big(  \int_{ \partial \D}   \abs{g_t(z)}^p  \, d\mu(z) \Big)^{\frac{1}{p} } \, dt  
\end{equation}
for some probablility (Borel) measure \(\mu\). In general, by theory for Minkowski's inequality, see for example \cite{Hardy-1988}, the condition above holds if and only if there exist functions \(g^{(1)}\colon ]0,1[\to [0,\infty[\) and \( g^{(2)} \colon \D \to \C \), and a \(\theta\in \partial \D\) such that \( g_t(z) = \theta g^{(1)}(t) g^{(2)} (z) \) for almost all \(t\in]0,1[\) and \(z\in \SUPP(\mu)\). However, by Theorem \ref{thm:aCharacterization}, we can assume \((\phi_n)\in \APPRE(\xi)\) for some \(\xi\in\partial\D\). Thus, \(\SUPP(\mu) = \{\xi\} \), in which case \ref{eq:MinkowskiAfterProkhorov} holds if and only if there is a \(\theta \in \partial \D\)  such that  \( \theta g_t(\xi) \in [0,\infty[ \) for almost every \(t\in]0,1[\). 

Condition \ref{eq:ineq2} holds, when \((\phi_n)\) is an approximate evaluation map, if and only if there exists \( \xi\in\partial \D \) such that \( \abs{g_t(\xi)} = \norm{g_t}_\infty\) for almost every \(t\in]0,1[\). We have obtained:
\begin{thm}\label{thm:mainDA}
Let \(p\in]1,\infty[\), \(\alpha>-1\) and \(X=H^p\) or \(X=A^p_{\alpha} \), and let \(g_t\in\DA \) for \(t\in]0,1[\). If \(\{M_{g_t} : t\in ]0,1[\}\) satisfies \(W(X)\), the inequality
\[
\EN\Big(     \int_0^1  M_{g_t} \, dt  \Big)  \leq  \int_0^1 \EN( M_{g_t}) \, dt
\]
is an equality if and only if there exists \(\xi,\theta\in\partial \D\)  such that for almost every \(t\in]0,1[\), we have \( \theta g_t(\xi) = \norm{g_t}_\infty\). 
\end{thm}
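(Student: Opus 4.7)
The plan is to reduce via Theorem \ref{thm:aCharacterization} to the conditions \ref{eq:ineq1} and \ref{eq:ineq2} for some approximate evaluation map $(\phi_n)\in\APPRE(\xi)$, and then to exploit the continuity of each $g_t$ on $\CLOSED{\D}$ to convert those two abstract conditions into pointwise conditions at the single boundary point $\xi$.

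For \ref{eq:ineq2}: continuity of $g_t$ at $\xi$ together with the concentration of $|\phi_n|^p$-mass at $\xi$ yields $\lim_n \norm{M_{g_t}\phi_n}_X = |g_t(\xi)|$ for each $t$. The right-hand side of \ref{eq:ineq2} equals $\EN(M_{g_t})$, which by Lemma \ref{lem:MaximizingApproxEvalForMultOp} (whose maximizing sequence is weakly null, being an approximate evaluation map) coincides with $\norm{g_t}_\infty$. Hence \ref{eq:ineq2} reduces to $|g_t(\xi)|=\norm{g_t}_\infty$ for a.e.\ $t$. For \ref{eq:ineq1}: the Prokhorov argument spelled out before the theorem turns it into the Minkowski-type equality \eqref{eq:MinkowskiAfterProkhorov} for some probability measure $\mu$. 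Since $(\phi_n)$ is a weakly null approximate evaluation map at $\xi$, mass is first pushed off compacts of $\D$ by weak convergence and then concentrated at $\xi$, forcing $\mu$ to be the unit point mass at $\xi$. With such $\mu$, \eqref{eq:MinkowskiAfterProkhorov} collapses to the complex triangle-equality $|\int_0^1 g_t(\xi)\,dt|=\int_0^1 |g_t(\xi)|\,dt$, which holds iff there is $\theta\in\partial\D$ with $\theta g_t(\xi)\in[0,\infty[$ for a.e.\ $t$. Combining the two reduced conditions for a common $\xi$ yields precisely $\theta g_t(\xi)=\norm{g_t}_\infty$ for a.e.\ $t$, giving the forward direction.

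For the converse, given $\xi,\theta$ satisfying the identity I take any unit-norm $(\phi_n)\in\APPRE(\xi)$ — for instance the extremal sequence from Lemma \ref{lem:MaximizingApproxEvalForMultOp} with $z_n\to\xi$. The identity immediately supplies $|g_t(\xi)|=\norm{g_t}_\infty$ (hence \ref{eq:ineq2}) and a common argument for the nonzero values $g_t(\xi)$ (hence \ref{eq:ineq1} via the Prokhorov picture), so Theorem \ref{thm:aCharacterization} delivers equality in \eqref{eq:mainEquality}. The main obstacle, and the point that deserves careful checking, is the clean identification $\lim_n \norm{M_{g_t}\phi_n}_X = |g_t(\xi)|$ for a.e.\ $t$ together with the forcing that the Prokhorov limit is the point mass at $\xi$; both are standard consequences of continuity of $g_t$ on $\CLOSED{\D}$ and of the fact that an approximate evaluation map at $\xi$ deposits all of its $L^p$-mass at $\xi$ in the limit, but they should be verified explicitly in both the Hardy and the weighted Bergman cases.
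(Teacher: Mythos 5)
Your proposal is correct and follows essentially the same route as the paper: reduce via Theorem \ref{thm:aCharacterization} to \ref{eq:ineq1} and \ref{eq:ineq2} for an approximate evaluation map at a single \(\xi\in\partial\D\), show the Prokhorov limit measure is the point mass at \(\xi\) so that \ref{eq:ineq1} collapses to the triangle equality \( \abs{\int_0^1 g_t(\xi)\,dt} = \int_0^1 \abs{g_t(\xi)}\,dt \) and \ref{eq:ineq2} to \( \abs{g_t(\xi)} = \norm{g_t}_\infty \) almost everywhere. The verifications you flag at the end (that \(\lim_n\norm{M_{g_t}\phi_n}_X=\abs{g_t(\xi)}\) and that \(\mu=\delta_\xi\)) are exactly the steps the paper also treats as routine consequences of \(g_t\in\DA\) and the definition of \(\APPRE(\xi)\).
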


Notice that \(\EN(S)\leq \norm{S}_e\leq \norm{S}\) for any bounded operator \(S\), and \(\EN(M_g) = \norm{M_g}_X \) for \(g\in H^\infty\), \( X=H^p\) or \(X=A^p_\alpha\) , where \(p\in]1,\infty[\), \(\alpha>-1\), see Lemma \ref{lem:MaximizingApproxEvalForMultOp}. We have, therefore, obtained the following corollary. 

\begin{cor}
Let \(p\in]1,\infty[\), \(\alpha>-1\) and \(X=H^p\) or \(X=A^p_{\alpha} \), and let \( g_t\in\DA \) for \(t\in]0,1[\). If \(\{M_{g_t} : t\in ]0,1[\}\) satisfies \(W(X)\), the following are equivalent: 
\begin{itemize}
\item There exists \(\xi,\theta\in\partial \D\)  such that for almost every \(t\in]0,1[\), \( \theta g_t(\xi) = \norm{g_t}_\infty\) holds. \\[-6pt]
\item Any of the quantities \(\EN\Big(     \int_0^1  M_{g_t} \, dt  \Big), \norm{   \int_0^1  M_{g_t} \, dt  }_e,  \norm{   \int_0^1  M_{g_t} \, dt  } \) equals any of \(    \int_0^1  \EN(  M_{g_t}  ) \, dt \), \(   \int_0^1  \norm{  M_{g_t}  }_e  \, dt \) and \(   \int_0^1  \norm{   M_{g_t}  }  \, dt  \).\\[-6pt]
\item All of the quantities mention above are equal.
\end{itemize}
\end{cor}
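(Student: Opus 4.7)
The plan is to collapse the six numerical quantities appearing in the corollary into two, and then to invoke Theorem \ref{thm:mainDA}. First, I would use Remark \ref{rem:intOpIsMult}: the hypothesis $g_t\in\DA$ together with $W(X)$ ensures $\int_0^1 M_{g_t}\, dt = M_G$ where $G := \int_0^1 g_t\, dt \in \DA$, so every quantity on the ``left-hand side'' of the purported identities refers to a single multiplication operator with symbol in the disk algebra. Next, I would apply Lemma \ref{lem:MaximizingApproxEvalForMultOp} together with the universal chain $\EN(S)\leq \norm{S}_e\leq \norm{S}$ to deduce that, for every $g\in H^\infty$,
\[
\EN(M_g) \;=\; \norm{M_g}_e \;=\; \norm{M_g} \;=\; \norm{g}_\infty.
\]
Applied to $G$, this shows the three left-hand quantities all equal $\norm{G}_\infty$; applied pointwise in $t$ and integrated, the three right-hand quantities all equal $\int_0^1 \norm{g_t}_\infty\, dt$, a finite number by Proposition \ref{prop:functionTheoreticWX}.

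With this reduction, each of the three bullets becomes a statement about the single scalar identity $\norm{G}_\infty = \int_0^1 \norm{g_t}_\infty\, dt$. Theorem \ref{thm:mainDA} is precisely the assertion that this identity (phrased there as $\EN(M_G) = \int_0^1 \EN(M_{g_t})\, dt$) is equivalent to the first bullet. The implication (third bullet) $\Rightarrow$ (second bullet) is trivial. For (second) $\Rightarrow$ (first), any single equality between a left-hand and a right-hand quantity collapses, by the previous paragraph, to the scalar identity, and Theorem \ref{thm:mainDA} then gives the first bullet. Finally, (first) $\Rightarrow$ (third) follows because Theorem \ref{thm:mainDA} yields the scalar identity, which the collapse from the first paragraph promotes to equality of all six quantities.

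I do not expect a serious obstacle: once one records that $\int_0^1 M_{g_t}\, dt$ is itself a multiplication operator (Remark \ref{rem:intOpIsMult}) and that the three operator-norm--type quantities collapse to $\norm{\cdot}_\infty$ on multiplication operators (Lemma \ref{lem:MaximizingApproxEvalForMultOp} and the sandwich $\EN \le \norm{\cdot}_e \le \norm{\cdot}$), the corollary is a formal bookkeeping consequence of Theorem \ref{thm:mainDA}. The only minor point requiring care is measurability of $t\mapsto \norm{g_t}_\infty$ and of the other integrands, but these are immediate from the pointwise equalities just mentioned and the hypotheses of Proposition \ref{prop:functionTheoreticWX}.
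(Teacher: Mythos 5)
Your proposal is correct and follows essentially the same route as the paper: the paper derives the corollary in the sentence preceding it, from the chain $\EN(S)\leq\norm{S}_e\leq\norm{S}$, the identity $\EN(M_g)=\norm{M_g}=\norm{g}_\infty$ furnished by Lemma~\ref{lem:MaximizingApproxEvalForMultOp}, the fact that $\int_0^1 M_{g_t}\,dt$ is itself a multiplication operator (Remark~\ref{rem:intOpIsMult}), and Theorem~\ref{thm:mainDA}. Your write-up merely makes the bookkeeping explicit.
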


We end this section with a simple example. Prior to that, recall that a Blaschke product is a function \(B\colon \D \to \D\) of the form
\[
B(z) = z^m  \prod_{n=1}^\infty \frac{ \CONJ{a_n} }{ \abs{a_n} } \frac{a_n - z}{ 1-\CONJ{a_n} z}, \quad  m=0,1,2,\ldots,
\]
where \((a_n)\subset \D\) satisfies \(\sum (1-\abs{a_n})<\infty\). In the example, we will use the fact that \(\abs{B(z)}\) has nontangential limit 1 at almost every point on the boundary \(\partial \D\). For information about Blaschke products, we refer the reader to \cite[p.~63-66]{Hoffman-1962} and \cite[p.~19-20]{Duren-1970}.

\begin{exam}
Let \(p>1\) and \(c\in \R\). We consider the family \( \{ M_{g_{t,c}} : t\in]0,1[ \}\) of operators acting on \(H^p\), where \(g_{t,c}(z):= (c+t+z)B(z), t\in]0,1[,  z\in \CLOSED{\D}\), where \(B\) is a Blaschke product whose zeros contain a sequence \((\zeta_n)\) converging to \(1\). The function \(g_{t,c}\) is not continuous at \(1\) due to the factor \(B\), because \(\lim_n \abs{B(\zeta_n)} = 0\) and there is a sequence \((z_n)\) approaching \(1\) tangentially such that \(\lim_n \abs{B(z_n)} = 1\). This can be seen by a diagonalization argument, because the radial limits of \(\abs{B}\) are \(1\) almost everywhere. In view of \eqref{eq:MaxSeq}, for \(c\geq 0\), the approximate evaluation map \(f_n^X\) induced by the sequence \( (z_n) \) maximizes \(\abs{g_{t,c}}, t\in]0,1[\), proving \ref{eq:ineq2}. Moreover,  
\begin{align*}
\lim_n  \norm{\int_0^1 M_{g_{t,c}} f_n^X \, dt }_{H^p} &= \lim_n  \norm{ B \int_0^1 (c+t+\cdot) \, dt  \, f_n^X  }_{H^p}  = \int_0^1 (c+t+1) \, dt  \\
&=    \int_0^1  \lim_n  \norm{ (c+t+\cdot)   \, B \,  f_n^X   }_{H^p} \, dt  =  \lim_n  \int_0^1 \norm{ M_{g_t} f_n^X  }_{H^p} \, dt
\end{align*}
  so \ref{eq:ineq1} holds. By Theorem \ref{thm:aCharacterization}, we can conclude that the family \( \{ M_{g_{t,c}} : t\in]0,1[ \} \) satisfies \eqref{eq:generalStatement} when \(c\geq 0\). Similarly, if \(c\leq -1\) the same conclusion can be made. If \(c\in]-1,0[\), \ref{eq:ineq2} fails. Indeed, the maximum for \(\abs{g_{t,c}},  t > -c\) is only obtained by a sequence \(z_n\) converging to \(1\) while a sequence converging to \(-1\) is needed to maximize \(\abs{g_{t,c}},  t < -c\). Therefore, no approximate evaluation map can make \ref{eq:ineq2} satisfied, so by Theorem \ref{thm:aCharacterization}, there is a strict inequality instead of equality in \eqref{eq:generalStatement} for the family \( \{ M_{g_{t,c}} : t\in]0,1[ \} \) when \(c\in]-1,0[\). 

\end{exam}

\section{Further research}
The condition \(W(X)\) is not optimal, only sufficient. A natural generalization of \(W(X)\) is obtained by replacing the demand of \(t\mapsto S_t f\) being continuous for every \(f\in X\), with conditions of \(t\mapsto S_t f\) being Bochner integrable for every \(f\in X\) and develop theory from there.

One could also develop theory for other types of families \(\{S_t :t \in ]0,1[\}\) than the multiplication operators and weighted composition operators. Letting \(X\) be other spaces than Hardy and Bergman spaces is also to my knowledge unknown territory.

\section{Acknowledgments}

The author was supported by the Magnus Ehrnrooth Foundation.

\bibliographystyle{amsplain}
\bibliography{bibliography}

\end{document}